\newtheorem{lemma}{LEMMA}%   [section]
\newtheorem{proposition}[lemma]{PROPOSITION}
\newtheorem{corollary}[lemma]{COROLLARY}
\newtheorem{theorem}[lemma]{THEOREM}
\newtheorem{remark}[lemma]{REMARK}
\newtheorem{remarks}[lemma]{REMARKS}
\newtheorem{example}[lemma]{EXAMPLE}
\newtheorem{definition}[lemma]{DEFINITION}
\newenvironment{knownresult}[1][ASSUMPTION]{\begin{trivlist}
\item[\hskip \labelsep {\bfseries #1.}] \itshape}    {\end{trivlist}}
\newcommand{\real}{\mathbbm{R}}
\newcommand{\nat}{\mathbbm{N}}
\renewcommand{\a}{\alpha}
\renewcommand{\b}{\beta}
\newcommand{\g}{\gamma}
\newcommand{\vp}{\varphi}
\newcommand{\ve}{\varepsilon}
\newcommand{\reald}{{\real^d}}
\newcommand{\on}{\quad\text{ on }}
\newcommand{\und}{\quad\mbox{ and }\quad}
\newcommand{\inv}{^{-1}}
\newcommand{\ov}{\overline}
\newcommand{\V}{\mathcal V}  
\newcommand{\W}{\mathcal W}  
\newcommand{\C}{\mathcal C}
\renewcommand{\H}{{\mathcal H}}
\newcommand{\B}{\mathcal B}
\newcommand{\M}{\mathcal M}
\newcommand{\K}{\mathcal K}
\newcommand{\Q}{\mathcal Q}
\newcommand{\dist}{\mbox{\rm dist}}
\newcommand{\supp}{\operatorname*{supp}}
\newcommand{\convex}{\operatorname*{conv}}
\newcommand{\itemframe}%
{\setlength{\parskip}{10pt}\begin{enumerate} \setlength{\topsep}{10pt}%
\setlength{\itemsep}{15pt}\setlength{\parsep}{5pt}}
\newcommand{\vx}{\ve_x}
\newcommand{\Px}{\mathcal P(X)}
\newcommand{\uc}{{U^c}}
\newcommand{\vc}{{V^c}}
\newcommand{\wc}{{W^c}}
\newcommand{\lam}{\lambda}
\newcommand{\kap}{\operatorname*{cap}}
\newcommand{\ppt}{\mathbbm P=(P_t)_{t>0}}
\newcommand{\vvl}{\mathbbm V=(V_\lambda)_{\lambda>0}}
\newcommand{\liml}{\lim_{\lambda\to \infty}} 
\newcommand{\ev}{E_{\mathbbm V}}  
\newcommand{\es}{E_{\mathbbm P}}  
\newcommand{\lvl}{\lambda V_\lambda}
\newcommand{\du}{\delta_U}
\newcommand{\gr}{G_0 }
\date{}
\title{Hunt's hypothesis (H) and triangle property\\  of  the Green function}
\author{Wolfhard Hansen and Ivan Netuka}
\begin{document}
\maketitle 

\begin{abstract}

Let $X$ be a locally compact abelian group with countable base and let $\mathcal W$ be
a convex cone of positive numerical functions on $X$ which is invariant under the group action
and such that  $(X,\mathcal W)$ is a balayage space or (equivalently, if $1\in \mathcal W$) such that  $\mathcal W$
is the set of excessive functions of a Hunt process on~$X$,  
$\mathcal W$  separates points, every function in $\mathcal W$ is the supremum of its
continuous minorants in $\mathcal W$, and there exist  strictly positive continuous  $u,v\in \mathcal W$ 
such that $u/v\to 0$  at infinity.

 Assuming that there is a~Green function 
$G>0$ for $X$ which locally satisfies the triangle inequality $G(x,z)\wedge G(y,z)\le C G(x,y)$
(true for many L\'evy processes), 
it is shown that Hunt's hypothesis (H) holds, that is, every semipolar set is polar.

% Let $(X,\mathcal W)$ be a balayage space, $1\in \mathcal W$, or -- equivalently -- let $\mathcal W$
% be the set of excessive functions of a Hunt process on a locally compact space~$X$ with countable base such that
% $\mathcal W$  separates points, every function in $\mathcal W$ is the supremum of its
% continuous minorants and there exist  strictly positive continuous       $u,v\in \mathcal W$ 
% such that $u/v\to 0$  at infinity. We suppose that there is a~Green function 
% $G>0$ for $X$ which locally satisfies the triangle inequality $G(x,z)\wedge G(y,z)\le C G(x,y)$

 {
 Keywords:   Hunt process;  L\'evy process; balayage space;   Green
 function; 3G-property;  continuity principle; polar set; semipolar set; hypothesis (H).

  MSC:       31D05,   60J45, 60J60, 60J75.}

\end{abstract}

The purpose of this short paper is to show that in the settings considered 
in \cite{H-fuku,H-wiener,HN-unavoidable,HN-harnack} 
Hunt's hypothesis (H) holds, that is, semipolar sets are polar provided the underlying space $X$ is
an abelian group and the set $\W$ of positive hyperharmonic functions on $X$
(the set of excessive functions of a~corresponding Hunt process) is invariant
under the group action. The essential property we use is a~local triangle
property of a~Green function for $(X,\W)$.
Our results constitute a~contribution
to the long-lasting discussion of Getoor's conjecture, that is, of the 
validity of (H) for all ``reasonable'' L\'evy processes (see \cite{glover-rao, Hu-Sun-Zhang} and Example \ref{levy}).

Let $X$ be a~locally compact space with countable base.
Let $\C(X)$ denote the set of all continuous real functions on $X$
and let $\B(X)$ be the set of all Borel measurable numerical functions on $X$.
The set of all (positive) Radon measures on $X$ will be denoted by $\M(X)$.

Moreover, let $\W$ be a~convex cone of positive lower 
semicontinuous numerical functions on~$X$ such that  $(X,\W)$ is a~balayage space
(see \cite{BH}, \cite{H-course}  or \cite[Appendix]{HN-unavoidable}). In particular, the following holds: 
 
\begin{itemize} 
\item[\rm (C)]
 $\W$ linearly separates the points of $X$, for every $w\in \W$,      
\[
              w=\sup\{v\in\W\cap \C(X)\colon v\le w\}, %\qquad\mbox{ for every } w\in \W,
\]
and there are strictly positive $u,v\in\W\cap \C(X)$ such that $u/v\to 0$  at~infinity.   
\end{itemize} 

\begin{remarks}{\rm
1. If $1\in \W$, then there exists a~Hunt process $\mathfrak X$ 
on~$X$ such that $\W$ is the set $\es$ of excessive functions for the transition semigroup 
$\mathbbm P=(P_t)_{t>0}$ of~$\mathfrak X$ (see \cite[Proposition
1.2.1]{H-course} and \cite[IV.8.1]{BH}), that is,
\[
             \W=\{v\in \B^+(X)\colon \sup\nolimits_{t>0} P_tv=v\}.
\]

2. Let us note that the condition $1\in \W$ is not very
restrictive. Indeed, if $(X,\W)$ is a~balayage space, 
$w_0\in \W\cap C(X)$ is strictly positive, and $\widetilde \W:=\{w/w_0\colon w\in \W\}$,   
then $(X,\widetilde \W)$ is a~balayage space such that $1\in \widetilde \W$,
and results for $(X,\widetilde \W)$ yield results for~$(X,\W)$.

3. Moreover, given any sub-Markov right-continuous semigroup $\mathbbm P=(P_t)_{t>0}$  on~$X$
such that (C) is satisfied by its convex cone $\es$ of excessive functions, $(X,\es)$ is a~balayage space, 
and $\mathbbm P$ is the transition semigroup of a~Hunt process (see  \cite[Corollary 2.3.8]{H-course}
or \cite[Corollary A.5]{HN-unavoidable}).   
}              
 \end{remarks}

Let us recall that, for all $A\subset X$ and $u\in \W$, the function $R_u^A$ is the infimum  
of all $v\in \W$ such that $v\ge u$ on $A$, and $\hat R_u^A(x):=\liminf_{y\to x} R_u^A(y)$, $x\in X$.

 A set $P$ in $X$ is \emph{polar}, if $\hat R_v^P=0$ for some (every) function $v>0$ in $\W$. A set $T$ in $X$
  is \emph{totally thin}, if $\hat R_v^T<v$ for some $v\in\W$, and \emph{semipolar},
if it is a~countable union of totally thin sets. For example, the sets $\{\hat R_u^A<R_u^A\}$,
$A\subset X$, $u\in \W$, are semipolar (and subsets of $A\cap \partial A$; see \cite[VI.5.11 and VI.2.3]{BH}). 

A function $h\in \H^+(X)$ is \emph{harmonic} on an open set $U$ in $X$ if $h|_U\in \C(U)$ and
$\int h\,d\vx^\vc=h(x)$, for all $x\in U$ and open   $V$  such that $x\in V$ and
$\ov V$ is compact in $U$ (the measures $\vx^\vc$ are given by $\int u \,d\vx^\vc=R_u^\vc(x)$, $u\in \W$).

\begin{knownresult}[ASSUMPTION A] 
Let us assume that  $G\colon X\times X\to
(0,\infty]$ is  a~Borel measurable function, 
$G=\infty$ on the diagonal, $G<\infty$ off the diagonal, 
such that $G$ is a~Green function for $(X,\W)$, that is, 
the following holds {\rm(}see {\rm \cite{BH,H-course}} for the definition of potentials for $(X,\W)${\rm)}:
\begin{itemize} 
\item [\rm (i)] 
   For every $y\in X$,  $ G(\cdot,y)$ is a~potential which is harmonic on $X\setminus \{y\}$.   
\item [\rm (ii)] 
For every potential $p$ on $X$, there exists a~measure $\mu$ on $X$    such that 
\begin{equation}\label{p-rep}
p=G\mu:=\int G(\cdot,y)\,d\mu(y).
\end{equation} 
\end{itemize} 
\end{knownresult} 

\begin{remarks}{\rm
1. Having (i), each of the following properties implies (ii).
\begin{itemize}
\item
$G$ is locally bounded off the diagonal, each function $G(x,\cdot)$ is lower semicontinuous 
on $X$ and continuous on $X\setminus \{x\}$, and there exists a~measure~$\nu$ on~$X$ such that $G\nu\in \C(X)$ 
and $\nu(U)>0$ for every  finely open  $U\ne \emptyset$ (see \cite[Theorem~4.1]{HN-representation}).
%the latter holds, for example, if  $V_0(x,\cdot)\ll \nu$, $x\in X$). 
\item
$G$ is lower semicontinuous on $X\times X$, continuous outside the diagonal,
and $\W=\es$ for some sub-Markov semigroup $\mathbbm P=(P_t)_{t>0}$ such that 
the potential kernel $V_0:=\int_0^\infty P_t\,dt$   is 
proper, and there is a~measure~$\mu$ on $X$ such that $V_0f:=\int G(\cdot, y)\,d\mu(y)$
(see \cite[p.~114]{maagli-87}, where (c) follows from \cite[VI.2.6 and III.6.6]{BH}).
% $G$ is lower semicontinuous on $X\times X$, continuous outside the diagonal,
% the potential kernel $V_0:=\int_0^\infty P_t\,dt$ of $\mathfrak X$ is 
% proper, and there is a~measure~$\mu$ on $X$ such that $V_0f:=\int G(\cdot, y)\,d\mu(y)$
% (see \cite[p.~114]{maagli-87}, where (c) follows from \cite[VI.2.6 and III.6.6]{BH}).
\end{itemize} 

2. 
% For the special case $X=\reald$ with $\rho(x,y)=|x-y|$ and isotropic unimodular Green function,
% covering rather general L\'evy processes, see \cite[Section 6]{HN-unavoidable} and \cite{H-fuku}.
% 3.
The measure in (\ref{p-rep}) is uniquely determined and, given any measure~$\mu$
on~$X$ such that $p:=G\mu$ is a~potential, the complement of the support of $\mu$ is the largest open
set, where $p$ is harmonic (see, for example, \cite[Proposition 5.2 and Lemma~2.1]{HN-representation}).
% Assumption \ref{main-ass} implies that every locally bounded
% potential is a~countable sum of potentials in $\C(X)$
% (see \cite{HN-complements}). \marginpar{\ !!!!}
%
% 4.
}
\end{remarks}

\begin{example}\label{levy} {\rm
Let  $\mathbbm P=(P_t)_{t>0}$ be a right continuous sub-Markov semigroup on~$\reald$, $d\ge 1$,
such that the potential kernel $V_0:=\int_0^\infty P_t\,dt$ is given by $V_0f=G_0\ast f$, %$f\in \B^+(\reald)$,
 where $G_0=g(|\cdot|)$  and $g\colon [0,\infty)\to (0,\infty]$ is decreasing with %$0<g<\infty$ on $(0,\infty)$, 
 \hbox{$\lim_{r\to 0} g(r)=g(0)=\infty$}, $\lim_{r\to\infty} g(r)=0$,  $\int_0^1 g(r) r^{d-1}\,dr<\infty$, and $g(r)\le C g(2r)$
%for some constants  $C,r_0\in (0,\infty)$ and all $0<r<r_0$.
for small $r>0$.

Then $(\reald, \es)$ is a balayage space such that $G(x,y):=G_0(x-y)$ satisfies Assumption A as well as 
the following Assumption B   (see \cite[Section 6]{HN-unavoidable}
and \cite{H-fuku}; cf.\ \cite{H-wiener} for more general L\'evy processes).
}
\end{example}

\begin{knownresult}[ASSUMPTION B] 
We assume, in addition, that $G$ has the \emph{local triangle
  property}, that is, $X$ is covered by open sets $U$ for which there exists a constant $C>0$
such that 
\begin{equation}\label{tri}
         G(x,z)\wedge G(y,z)\le C G(x,y), \qquad x,y,z\in U.
\end{equation} 
\end{knownresult}

\begin{proposition}[Continuity principle of Evans-Vasilesco]\label{evans-vasilesco}
Let $\mu$ be a~measure on $X$, $A:=\supp (\mu)$ and $x_0\in A$ such that $G\mu$ is a~potential
and $(G\mu)|_A$ is continuous at~$x_0$. Then $G\mu$ is continuous at~$x_0$.
\end{proposition}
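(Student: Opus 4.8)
The plan is to exploit that $p:=G\mu$, being a potential, is lower semicontinuous, so that it suffices to establish the reverse inequality $\limsup_{x\to x_0}p(x)\le p(x_0)$. If $p(x_0)=\infty$ there is nothing to prove, so I would assume $p(x_0)<\infty$; then $\mu(\{x_0\})=0$, since otherwise $p(x_0)\ge\mu(\{x_0\})\,G(x_0,x_0)=\infty$. Next I would fix a decreasing sequence of relatively compact open neighbourhoods $V_n$ of $x_0$ with $\bigcap_n\bar V_n=\{x_0\}$ and split $\mu=\mu_n+\mu^n$, where $\mu_n:=\mu|_{V_n}$ and $\mu^n:=\mu|_{X\setminus V_n}$, so that $p=G\mu_n+G\mu^n$. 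Since $\supp\mu^n\subseteq X\setminus V_n$, the remark following Assumption~A shows that $G\mu^n$ is harmonic, hence continuous, on $V_n$, in particular at $x_0$. As $G\mu^n\le p$, this gives, for every $n$,
\[
 \limsup_{x\to x_0}p(x)\le\limsup_{x\to x_0}G\mu_n(x)+G\mu^n(x_0)\le\Bigl(\sup\nolimits_X G\mu_n\Bigr)+p(x_0),
\]
so the whole assertion reduces to proving that the local potentials are uniformly small, namely $\sup_X G\mu_n\to0$ as $n\to\infty$.

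The first tool I would invoke is the maximum principle for potentials: for any measure $\lambda$ one has $\sup_X G\lambda=\sup_{\supp\lambda}G\lambda$. This is the domination principle for the Green function $G$, applied with a constant majorant; recall that by Remark~2 we may assume $1\in\W$, so that constants belong to $\W$ (see \cite{BH}). Applied to $\lambda=\mu_n$—legitimate because $G\mu_n\le p$ is finite on $\supp\mu_n\subseteq A$ for large $n$ by continuity of $p|_A$ at $x_0$—it reduces the claim to $\sup_{\supp\mu_n}G\mu_n\to0$, where now $\supp\mu_n\subseteq A\cap\bar V_n$ shrinks to $\{x_0\}$. I would settle this uniform estimate by contradiction. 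If it fails, then, after passing to a subsequence, there are $\varepsilon_0>0$ and $y_n\in\supp\mu_n\subseteq A$ with $G\mu_n(y_n)\ge\varepsilon_0$, and necessarily $y_n\to x_0$. On the one hand, writing $G\mu_n(y_n)=p(y_n)-G\mu^n(y_n)$ and using $p(y_n)\to p(x_0)$ (continuity of $p|_A$ at $x_0$), I get $\limsup_n G\mu^n(y_n)\le p(x_0)-\varepsilon_0$. On the other hand, for each fixed $z\ne x_0$ the function $G(\cdot,z)$ is harmonic, hence continuous, at $x_0$, so $G(y_n,z)\to G(x_0,z)$, and eventually $z\in X\setminus V_n$; Fatou's lemma then yields
\[
 \liminf_n G\mu^n(y_n)=\liminf_n\int_{X\setminus V_n}G(y_n,z)\,d\mu(z)\ge\int_{X\setminus\{x_0\}}G(x_0,z)\,d\mu(z)=p(x_0),
\]
the last equality because $\mu(\{x_0\})=0$. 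These two bounds are incompatible, proving $\sup_{\supp\mu_n}G\mu_n\to0$ and hence $\sup_X G\mu_n\to0$; together with the first paragraph this gives $\limsup_{x\to x_0}p(x)\le p(x_0)$, i.e.\ continuity of $G\mu$ at $x_0$.

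The main obstacle is exactly this uniform smallness of the local potentials: the passage from the evident pointwise fact $G\mu_n(x_0)=p(x_0)-G\mu^n(x_0)\to0$ to the uniform statement over the whole shrinking support. Its resolution rests on the two ingredients above working in tandem—the maximum principle moves the supremum from $X$ onto $\supp\mu_n$, where the continuity of $p$ along $A$ is available, and Fatou's lemma supplies the matching lower bound for the harmonic far-potentials $G\mu^n$ at the points $y_n$. I expect the only genuinely delicate point to be the justification of the maximum principle in the present generality (equivalently, that the potential of a measure is reduced onto the support of that measure); everything else uses only the lower semicontinuity of potentials, the off-diagonal continuity of $G$ already built into Assumption~A\,(i), and Fatou's lemma. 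It is worth noting that this argument needs only Assumption~A and not the local triangle property of Assumption~B.
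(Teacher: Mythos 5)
Your reduction of the problem to showing $\sup_X G\mu_n\to 0$, and the Fatou/contradiction argument comparing $\limsup_n G\mu^n(y_n)\le p(x_0)-\varepsilon_0$ with $\liminf_n G\mu^n(y_n)\ge p(x_0)$, are fine. The gap is your ``first tool'': the maximum principle $\sup_X G\lambda=\sup_{\supp\lambda}G\lambda$. You justify it as the domination principle for $G$ applied with a constant majorant, i.e.\ as Corollary~\ref{domination} of this paper. But in this paper (and in the source it follows, \cite[V.4.11--V.4.14]{BH}) the logical order is the reverse: the continuity principle --- the very Proposition~\ref{evans-vasilesco} you are proving --- is used to prove Proposition~\ref{axiom-D}, which in turn is what yields Corollary~\ref{domination}. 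So the argument is circular unless you supply an independent proof of the maximum principle from Assumption~A alone, which you do not. (A secondary point: Corollary~\ref{domination} requires $G\mu_n$ to be a \emph{real} potential, whereas you only verify finiteness of $G\mu_n$ on $\supp\mu_n$.)

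This is not a fixable presentation issue but the crux of the matter. The maximum/domination principle is a deep property of the kernel, not a formal consequence of Assumption~A; classically, under duality hypotheses, its validity is essentially equivalent to Hunt's hypothesis (H). Your closing remark that the argument ``needs only Assumption A and not the local triangle property'' should have been a warning sign rather than a selling point: if the maximum principle followed from Assumption~A alone, then by your own (otherwise correct) reasoning the continuity principle, Proposition~\ref{axiom-D}, Corollary~\ref{domination}, and --- adding Assumption~C --- Corollary~\ref{H} would all hold without Assumption~B, so the paper's central hypothesis would be superfluous and (H) would follow from the mere existence of a Green function; that is essentially Getoor's conjecture, not a lemma one may quote. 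Assumption~B is precisely what powers the paper's proof: the local triangle inequality makes $(x,y)\mapsto G(x,y)\inv+G(y,x)\inv$ a quasi-metric, so that $G\approx\rho^{-\gamma}$ locally for a metric $\rho$, and the resulting nearest-point estimate $G\nu(x)\le 2^{\gamma}c^2\,G\nu(y_x)$, valid uniformly for all measures $\nu$ carried by $A$, is exactly the uniform control over the shrinking supports that your maximum principle was meant to deliver. If you want a non-circular proof along your lines, that estimate --- not the maximum principle --- is the tool you need.
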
 

{\sl Proof} (cf.~the proof of \cite[V.4.11]{BH}).  If $G\mu(x_0)=\infty$, then $G\mu$ is continuous at~$x_0$.
So let $G\mu(x_0)<\infty$. Let $U$ be an open neighborhood of $x_0$  
such that (\ref{tri}) holds. If $K$ is a
compact neighborhood of $x_0$ in $U$ and $\mu':=1_K\mu$, $\mu'':=1_{K^c}\mu$,
then $G\mu=G\mu'+G\mu''$, and $G\mu''$ is continuous at~$x_0$,  by
\cite[Lemma~2.1]{HN-representation}. %\cite[Theorem 6.6]{HN-unavoidable} (???).
Hence we may assume without loss of generality that the support $A$ of
$\mu$ is a~non-empty compact in $U$. 

By (\ref{tri}), $G(y,x)\le C G(x,y) $,  $x,y\in U$  (take $z=x$). Further, for all $x,y,z\in U$,   
\[
G(x,y)\inv \le C (G(x,z)\wedge G(y,z))\inv \le C(G(x,z)\inv + G(y,z)\inv).
\]
Therefore $(x,y)\mapsto G(x,y)\inv + G(y,x)\inv$ is a quasi-metric on $U\times U$ and,
  by \cite[Proposition 14.5]{heinonen}, there exist $c\ge 1$, $\g>0$ and a~metric $\rho$
for  $U$ such that 
\[   %\begin{equation}\label{G-rho}
      c\inv \rho^{-\g}\le G\le c \rho^{-\g} \on U\times U.
\]   %\end{equation} 
For   $x\in U$, let $y_x\in A$ be such that 
\[
 \rho(x,y_x)=\min\{\rho(x,y)\colon y\in A\}.
\]
Then, for every $y\in A$,
\[
  \rho(y_x,y)\le \rho(y_x,x)+\rho(x,y)\le 2\rho(x,y),
\]
and hence, for all measures $\nu$ on  $A$ and $x\in U$,
\begin{equation}\label{essential}
    G\nu(x)\le c \int \rho(x,y)^{-\g}\,d\nu(y)\le 2^\g c \int
    \rho(y_x,y)^{-\g}\,d\nu(y) \le 2^\g c^2G\nu(y_x).
\end{equation} 
Let us now fix $\ve>0$. Since $G\mu(x_0)<\infty$, there exists $r>0$ 
such that $\ov B(x_0,r)\subset U$ (where, of course, $B(x_0,r):=\{y\in U\colon \rho(y,x_0)<r\}$) and 
$\nu:=1_{B(x_0,r)}\mu$ satisfies $ G\nu(x_0)<\ve$. Then, again by
% \cite[Theorem 6.6]{HN-unavoidable},
\cite[Lemma 2.1]{HN-representation}, 
$G(\mu-\nu)$ is continuous at~$x_0$, and hence $(G\nu)|_A$ is continuous at~$x_0$ as well.
So there exists $\delta>0$ such that 
\begin{eqnarray*} 
   |G(\mu-\nu)-G(\mu-\nu)(x_0)|&<&\ve \on  B(x_0,\delta),\\
   G\nu&<&\ve \on B(x_0,2\delta)\cap A.
\end{eqnarray*} 
If $x\in B(x_0,\delta)$, then $\rho(x,y_x)\le \rho(x,x_0)<\delta$,
hence 
$\rho(y_x,x_0)\le \rho(y_x,x)+\rho(x,x_0)<2\delta$,
and therefore, by (\ref{essential}), 
\begin{equation*} 
   |G\nu(x)-G\nu(x_0)|\le G\nu(x)+G\nu(x_0)\le 2^\g c^2G\nu(y_x)+G\nu(x_0)<(2^\g c^2+1)\ve.
\end{equation*} 
Thus $|G\mu-G\mu(x_0)|<(2^\g c^2 +2)\ve$ on $B(x_0,\delta)$.
\hfill $\square$

\begin{remark}{\rm
In the proof of Proposition \ref{evans-vasilesco} it would, of course, be sufficient to know
that $G\approx g\circ \rho$  for some metric $\rho$ on $U$ and some  decreasing 
function~$g$ having the doubling property near $0$ (cf.\ also  \cite[Proposition 1.7]{H-wiener}   
for equivalences).
}
\end{remark}

\begin{proposition}\label{axiom-D}
For every real potential $p$ on $X$, there exists a~sequence $(p_n)$
of continuous real potentials on $X$ such that $\sum_{n\in\nat} p_n=p$
and the superharmonic supports of $p_n$, $n\in\nat$, are pairwise disjoint.
\end{proposition}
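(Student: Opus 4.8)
The plan is to represent $p=G\mu$ by means of Assumption A(ii) and then to reduce the statement to a measure-theoretic decomposition of $\mu$. Recall (Remark after Assumption A) that the superharmonic support of $G\mu$ equals $\supp\mu$. Hence it suffices to produce pairwise disjoint compact sets $K_n\subset X$ such that $\mu$ is concentrated on $\bigcup_n K_n$ and $(G\mu)|_{K_n}$ is continuous for every $n$; then I would set $\mu_n:=1_{K_n}\mu$ and $p_n:=G\mu_n$. Since $\mu=\sum_n\mu_n$, additivity of $\mu\mapsto G\mu$ gives $\sum_n p_n=G\mu=p$, and the superharmonic supports $\supp\mu_n\subset K_n$ are then pairwise disjoint.

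The heart of the argument is that each $p_n$ is automatically a \emph{continuous} real potential. First, $0\le p_n\le G\mu=p$, so $p_n$ is a real potential. To see continuity, fix $n$ and put $\nu_n:=\mu-\mu_n=1_{X\sms K_n}\mu$, so that $p_n=G\mu-G\nu_n$ on $X$. On the compact $K_n$ the function $G\mu$ is finite and continuous, while $G\nu_n$ is lower semicontinuous (being a potential) and finite (as $G\nu_n\le G\mu$); hence $p_n|_{K_n}=(G\mu-G\nu_n)|_{K_n}$ is upper semicontinuous. But $p_n$ is also a potential, hence lower semicontinuous, so $p_n|_{K_n}$ is in fact continuous. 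Since $\supp\mu_n\subset K_n$, the restriction of $p_n$ to $\supp\mu_n$ is continuous, and Proposition \ref{evans-vasilesco} then yields continuity of $p_n$ on all of $X$. I note that this semicontinuity observation is what makes any local finiteness of the family $(K_n)$ unnecessary.

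It remains to construct the sets $K_n$. Since $\mu$ is $\sigma$-finite, I would first split $X$ into countably many pairwise disjoint Borel sets $A_i$ with $\mu(A_i)<\infty$ and treat each finite measure $1_{A_i}\mu$ separately; compacts chosen inside distinct $A_i$ are then automatically disjoint. Within a fixed $A_i$ the compacts are selected recursively: given the part $R\subset A_i$ not yet covered, with $m:=\mu(R)$, inner regularity furnishes a compact $K_0\subset R$ with $\mu(K_0)\ge\frac34 m$, and Lusin's theorem, applied to the $\mu$-measurable function $G\mu$, furnishes a compact $K\subset K_0$ with $(G\mu)|_K$ continuous and $\mu(K_0\sms K)<\frac14 m$, whence $\mu(K)\ge\frac12 m$. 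Replacing $R$ by $R\sms K$ (again a subset of $A_i$, disjoint from all previously chosen compacts) halves the uncovered mass at each step, so iterating exhausts $1_{A_i}\mu$ up to a null set. Collecting all these compacts and relabelling them as $(K_n)$ gives pairwise disjoint compacts on whose union $\mu$ is concentrated and with $(G\mu)|_{K_n}$ continuous, as required.

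The main obstacle is the disjointness of the superharmonic supports: a single measure cannot in general be split into pieces whose \emph{closed} supports are disjoint unless one discards $\mu$-null sets and works with compact pieces sitting inside the successive Borel remainders. The device that makes the scheme go through is precisely the semicontinuity step of the second paragraph, which upgrades the mere continuity of $G\mu$ on each $K_n$ to continuity of the individual potentials $G(1_{K_n}\mu)$ with no control whatsoever on how the supports accumulate; the continuity principle of Proposition \ref{evans-vasilesco} then supplies the rest.
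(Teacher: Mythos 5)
Your proof is correct and follows essentially the same route as the paper's: represent $p=G\mu$ via Assumption A(ii), use Lusin's theorem to get pairwise disjoint compacts $K_n$ carrying $\mu$ with $p|_{K_n}$ continuous, deduce continuity of $p_n|_{K_n}$ from the lower semicontinuity of $p_n$ and of the complementary potential, and finish with Proposition \ref{evans-vasilesco}. The only difference is cosmetic: the paper invokes the disjoint-compact form of Lusin's theorem directly, while you reconstruct it by hand through inner regularity and a recursive exhaustion.
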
 

{\sl Proof} (cf.  the proof of \cite[V.4.12]{BH}). 
There exists a~measure $\mu$ on $X$ such that $G\mu=p$.
By Lusin's theorem, there exists a~sequence $(K_n)$ of pairwise disjoint compacts in $X$
such that $\mu(X\setminus \bigcup_{n\in\nat} K_n)=0$ and $p|_{K_n}$ is continuous for every $n\in\nat$.
Let 
\[ 
\mu_n:=1_{K_n} \mu \und p_n:=G\mu_n.
\]
Then, of course, $\sum_{n\in\nat} p_n=p$. For every $n\in\nat$, $p_n|_{K_n}$ is continuous, 
since $p|_{K_n}$ is continuous and both $p_n$, $\sum_{m\in \nat, m\ne n} p_m$ are lower semicontinuous.
Thus $p_n\in \C(X)$, by Proposition \ref{evans-vasilesco}. \hfill $\square$

\begin{corollary}[Domination principle]\label{domination}
Let $\mu$ be a~measure on $X$ and let $A$ be a~Borel measurable subset of $X$
such that $G\mu$ is a~real potential on~$X$ and $\mu(A^c)=0$. Then
%Then $w\ge G\mu$, for every function~$w\in\es$ satisfying $w\ge G\mu$ on~$A$.
\[ 
R_{G\mu}^A=G\mu.
\]
\end{corollary}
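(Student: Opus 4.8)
The plan is to establish the two inequalities separately, the trivial one directly and the essential one by a minimum‑principle comparison after reducing to continuous potentials. The inequality $R_{G\mu}^A\le G\mu$ is immediate: $G\mu\in\W$ and $G\mu\ge G\mu$ on $A$, so $G\mu$ is one of the functions competing in the infimum defining $R_{G\mu}^A$. For the reverse inequality it suffices to prove that every $v\in\W$ with $v\ge G\mu$ on $A$ already satisfies $v\ge G\mu$ on all of $X$; taking the infimum over such $v$ then yields $R_{G\mu}^A\ge G\mu$.

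First I would reduce to continuous potentials of compact support. By Proposition \ref{axiom-D}, write $p:=G\mu=\sum_n p_n$, where each $p_n=G\mu_n$ is a continuous real potential whose representing measure $\mu_n$ has compact support (in the construction, $\mu_n=1_{K_n}\mu$ with $K_n$ compact), so that $\mu=\sum_n\mu_n$ and hence $\mu_n\le\mu$ and $\mu_n(A^c)=0$. Setting $s_N:=\sum_{n\le N}p_n$, each $s_N$ is a continuous real potential whose representing measure $\sum_{n\le N}\mu_n$ has compact support and is carried by $A$, and $s_N\uparrow p$. Since $v\ge p\ge s_N$ on $A$, it is enough to show $v\ge s_N$ on $X$ for every $N$ and then let $N\to\infty$.

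Next I would compare $v$ and $s_N$ by a minimum principle. Let $U:=\{v<s_N\}$; this set is open, since $s_N$ is continuous and $v$ is lower semicontinuous. By construction $U\cap A=\emptyset$, so $U$ is disjoint from the support of the measure representing $s_N$, and therefore $s_N$ is harmonic on $U$ (by the remarks following Assumption~A). Hence $w:=v-s_N$ is hyperharmonic on $U$ and bounded below (because $s_N$ is bounded), with $\liminf_{x\to z}w(x)\ge0$ for every $z\in\partial U$ (by lower semicontinuity of $w$ together with $w\ge0$ on $U^c$) and $\liminf_{x\to\infty}w(x)\ge0$ (because the compactly supported potential $s_N$ tends to $0$ at infinity, a standard consequence of (C), while $v\ge0$). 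The minimum principle then forces $w\ge0$ on $U$, i.e.\ $U=\emptyset$, so that $v\ge s_N$; letting $N\to\infty$ gives $v\ge p$, as required.

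I expect the main difficulty to be conceptual rather than computational. One must resist replacing $R_{G\mu}^A$ by the regularized reduction $\hat R_{G\mu}^A$: before hypothesis~(H) is available, $\hat R_{G\mu}^A$ may be strictly smaller than $G\mu$, the discrepancy being carried by a semipolar set --- exactly the pathology that~(H) is meant to exclude. This is why the statement is formulated with the unregularized $R$, and why I would argue through a pointwise minimum‑principle comparison rather than through a balayage or energy identity; the latter would moreover require symmetry of $G$, which is not assumed here (only the local inequality $G(y,x)\le CG(x,y)$ holds). The continuity principle (Proposition \ref{evans-vasilesco}) enters only through Proposition \ref{axiom-D}, precisely to guarantee that the partial sums $s_N$ are continuous, so that $\{v<s_N\}$ is open and the minimum principle becomes applicable.
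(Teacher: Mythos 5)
Your overall strategy---reduce to continuous potentials via Proposition~\ref{axiom-D}, then compare by a minimum principle---is the right one: it is essentially the argument behind \cite[V.4.13 and V.4.14]{BH}, to which the paper's own (one-line) proof defers. But there is a fatal gap at the step that makes your comparison run: the set $U:=\{v<s_N\}$ is in general \emph{not} open, and your justification has the semicontinuity backwards. Lower semicontinuity of $v-s_N$ makes $\{v>s_N\}$ open and $\{v\le s_N\}$ closed; it says nothing about $\{v<s_N\}$. This is not a technicality of wording: already for Brownian motion on $\real^3$ (covered by Example~\ref{levy}), take $v=\sum_n c_n|\cdot-y_n|\inv\in\W$ with $(y_n)$ dense and $c_n>0$ chosen so that $v(x_0)<\infty$ at some fixed $x_0$; then $v\equiv\infty$ on a dense set, so for any continuous $s$ with $s(x_0)>v(x_0)$ the set $\{v<s\}$ contains $x_0$ but is not a neighborhood of~$x_0$. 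Once $U$ fails to be open, harmonicity of $s_N$ on $U$, hyperharmonicity of $w=v-s_N$ on $U$, and the boundary minimum principle are all meaningless. Moreover, openness of $U$ was silently doing a second, essential job: the representing measure $\mu_N':=1_{K_1\cup\dots\cup K_N}\mu$ of $s_N$ is \emph{carried} by $A$, but its closed support is only contained in $\ov A$, and it is exactly the implication ``$U$ open and $U\cap A=\emptyset$ $\Rightarrow$ $U\cap\ov A=\emptyset$'' that would keep $U$ away from $\supp(\mu_N')$. Since $v\ge s_N$ is known only on $A$, and lower semicontinuity does not propagate this inequality to limit points of $A$ (it gives $v(z)\le\liminf_{x\to z}v(x)$, the wrong direction), you also cannot simply rerun your argument on the honest open set $X\sms\supp(\mu_N')$.

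A correct repair---essentially the route of \cite{BH}---forces the supports into $A$ by inner regularity: choose compacts $L_1\subset L_2\subset\dots\subset A\cap(K_1\cup\dots\cup K_N)$ with $\mu\bigl(A\cap(K_1\cup\dots\cup K_N)\sms\bigcup_j L_j\bigr)=0$, and set $\nu_j:=1_{L_j}\mu$. Each $G\nu_j$ is again a continuous potential (both $G\nu_j$ and $G(\mu_N'-\nu_j)$ are lower semicontinuous and sum to the continuous function $s_N$, hence both are continuous), and by the second remark following Assumption~A its superharmonic support is $\supp(\nu_j)\subset L_j\subset A$, where $v\ge G\mu\ge G\nu_j$ holds pointwise. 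Now apply the minimum principle on the open set $X\sms\supp(\nu_j)$: here the boundary liminf condition \emph{does} follow from lower semicontinuity of $v$ and continuity of $G\nu_j$, because every boundary point lies in $\supp(\nu_j)\subset A$. Concerning behaviour ``at infinity'', your argument is also unjustified: in a general balayage space a continuous real potential need not be bounded, let alone tend to $0$ at infinity---condition (C) only provides two special functions $u,v$ with $u/v\to 0$---so instead one must use the minimum principle in the form where the hyperharmonic function is bounded below by the negative of a potential (here $w\ge -G\nu_j$), which is the form available in the balayage-space setting. With that, $v\ge G\nu_j$ on all of $X$; letting $j\to\infty$ gives $v\ge s_N$ by monotone convergence, and $N\to\infty$ gives $v\ge G\mu$, as required.
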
 

\begin{proof} See the proofs of \cite[V.4.13 and V.4.14]{BH}.
\end{proof}

\begin{knownresult}[ASSUMPTION C]
 From now on let us assume, in addition, that $X$ is an abelian topological
group and that $\W$ is invariant under the translations $x\mapsto
x+y$, $y\in X$. 
\end{knownresult}

\begin{proposition}\label{har-com}
Suppose that $v\in \W$, $w\in \W\cap \C(X)$, and $v\le w$. 
Then, for every $A\subset X$, the function $\hat R_v^A$ is harmonic in $X\setminus \ov A$.
\end{proposition}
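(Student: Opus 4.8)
\emph{Sketch of a proof.} The definition of harmonicity on the open set $X\sms\ov A$ asks for two things: the mean-value identity $\int \hat R_v^A\,d\vx^\vc=\hat R_v^A(x)$ for the relevant $x$ and $V$, and continuity of $\hat R_v^A$ on $X\sms\ov A$. The mean-value identity should be purely formal and use neither $w$ nor Assumption C. Fix a relatively compact open $V$ with $\ov V\subset X\sms\ov A$, so that $A\cap V=\emptyset$, i.e.\ $A\subset\vc$. Monotonicity of reductions in the set together with the idempotency $\hat R_{\hat R_v^A}^A=\hat R_v^A$ gives $\hat R_{\hat R_v^A}^{\vc}\ge\hat R_{\hat R_v^A}^A=\hat R_v^A$, while $\hat R_{\hat R_v^A}^{\vc}\le\hat R_v^A$ because reducing the function $\hat R_v^A\in\W$ onto a set cannot increase it; hence $\hat R_{\hat R_v^A}^{\vc}=\hat R_v^A$. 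As the semipolar set $\{R_{\hat R_v^A}^{\vc}>\hat R_{\hat R_v^A}^{\vc}\}$ sits in $\vc$, for $x\in V$ I may drop the hat and read off $\hat R_v^A(x)=R_{\hat R_v^A}^{\vc}(x)=\int\hat R_v^A\,d\vx^\vc$, the required mean value. In sheaf language this already says that $\hat R_v^A$ is harmonic on $X\sms\ov A$; what remains is continuity in the original topology.

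The continuity is the real content, and here $v\le w\in\W\cap\C(X)$ and Assumption C enter. Since $\hat R_v^A\le\hat R_w^A\le w$ and $\hat R_v^A$ is lower semicontinuous, it is locally bounded on $X\sms\ov A$, so only upper semicontinuity is at stake. I would first dispose of the case of a continuous potential $v=G\mu$. Then $\hat R_v^A\le v$ is again a potential, say $\hat R_v^A=G\nu$, and by the first step it satisfies the mean-value property on $X\sms\ov A$; consequently $\nu$ charges no open subset of $X\sms\ov A$, that is $\supp\nu\subset\ov A$ (this is the support description recorded in Remark~2 after Assumption~A). For $x_0\notin\ov A\supseteq\supp\nu$ the continuity of $x\mapsto\int G(x,y)\,d\nu(y)$ at $x_0$ then follows by dominated convergence from the continuity of each $G(\cdot,y)$ off $\{y\}$ (a consequence of its harmonicity, by (i)) together with the local boundedness of $G$ off the diagonal furnished by the bound $G\le c\rho^{-\g}$ obtained in the proof of Proposition \ref{evans-vasilesco}.

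The main obstacle is to pass from continuous potentials to an arbitrary $v\in\W$ with $v\le w$, where $\hat R_v^A$ need no longer be a potential and the Green representation can no longer be invoked globally. The plan is to exploit Assumption C: averaging the translates of $\hat R_v^A$ against an absolutely continuous probability measure supported near the origin produces functions that again lie in $\W$ (translation invariance), that are continuous (convolution of a locally bounded function with a continuous density), that are harmonic on slightly shrunken subsets of $X\sms\ov A$, and that approach $\hat R_v^A$. The delicate point is to upgrade this approximation from pointwise to locally uniform on $X\sms\ov A$, so that continuity survives in the limit; I expect the needed local equicontinuity of nonnegative harmonic functions from a Harnack-type inequality built on the quasi-metric and doubling estimates $c\inv\rho^{-\g}\le G\le c\rho^{-\g}$ of Proposition \ref{evans-vasilesco} and the homogeneity provided by the group action, with the continuity principle (Proposition \ref{evans-vasilesco}) and the domination principle (Corollary \ref{domination}) keeping the argument within the class of potentials where those tools apply.
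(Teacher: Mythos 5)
Your first step contains the fatal gap, and it is precisely the trap the paper warns against. The identity $\hat R^A_{\hat R_v^A}=\hat R_v^A$ is \emph{not} a formal property of balayage spaces. Unregularized reduction is trivially idempotent ($R^A_{R^A_v}=R^A_v$, since the competitors coincide), but for the regularized reduction the competitors for $\hat R^A_{\hat R_v^A}$ only need to dominate $\hat R_v^A$ on $A$, and $\hat R_v^A<v=R_v^A$ on a semipolar subset of $A$; whether that set can be neglected is exactly the issue. Probabilistically, $\hat R^A_u(x)=E^x\left[u(X_{T_A});T_A<\infty\right]$ for $u\in\W$, and if $A$ is totally thin and nonpolar, then $X_{T_A}$ lands at points of $A$ that are irregular for $A$, so iterating the balayage kernel strictly decreases the value: $\hat R^A_{\hat R^A_v}<\hat R^A_v$ wherever $A$ is hit with positive probability. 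Thus your ``purely formal'' idempotency is essentially equivalent to Hunt's hypothesis (H) --- the very statement this proposition is designed to prove (Corollary \ref{H}) --- and assuming it is circular. Note, moreover, that your step 1 uses nothing beyond general balayage space theory (monotonicity, the location of $\{R>\hat R\}$, the definition of $\ve_x^\vc$), so if it were correct it would establish the mean value identity for $\hat R_v^A$ in \emph{every} balayage space; the paper explicitly recalls, citing \cite[V.9.1]{BH}, that this fails even for compact $A$, and since $\hat R_v^A=R_v^A$ off $A$ is automatically continuous off $\ov A$, what fails in that counterexample is precisely the identity you call formal.

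You also have the division of labour exactly backwards, which is why the rest of the proposal attacks the wrong problem. Continuity of $\hat R_v^A$ on $X\sms\ov A$ is the free part: by \cite[VI.2.4]{BH}, $\hat R_v^A=R_v^A$ on $X\sms A$, and by \cite[VI.2.6]{BH} (this is where the hypothesis $v\le w\in\W\cap\C(X)$ enters), $R_v^A$ is harmonic, hence continuous, on $X\sms\ov A$. The genuine difficulty is the mean value identity $\int\hat R_v^A\,d\ve_x^\uc=\hat R_v^A(x)$: because the process may jump, $\ve_x^\uc$ can charge the semipolar subset of $A$ where $\hat R_v^A<R_v^A$, so the identity known for $R_v^A$ does not transfer to $\hat R_v^A$. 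This is where the paper uses Assumption C: it averages the translates $z\mapsto R_v^A(\cdot+z)$ against $\nu:=1_V\lambda$, where $G\lambda$ is a continuous strict potential, so that $\ve_x^\uc\ast\nu$ charges no semipolar set; harmonicity of the translates, the inequality $\int\hat R_v^A(y+z)\,d\ve_x^\uc(y)\le\hat R_v^A(x+z)$, and the fact that $\nu$ charges every nonempty finely open set then force equality for all $z\in V$, in particular at $z=0$. Your third paragraph does gesture at translation-averaging, but deploys it toward the (unnecessary) continuity question and leaves its key step --- passing continuity through the limit --- as a hope rather than an argument, so even that portion does not close.
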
 

\begin{proof} By \cite[VI.2.2]{BH}, we may assume that $A$ is a~Borel set 
(there exists a~$G_\delta$-set~$A'$ such that $A\subset A'\subset \ov
  A$ and $\hat R_v^{A'}=\hat  R_v^A$). Let  $u:=R_v^A$. By
  \cite[VI.2.4]{BH}, 
$  u=\hat u$ on~$X\setminus A$. In particular, $u$ is Borel measurable
(note  that $u=v$ on $A$).
By \cite[VI.2.6]{BH}, $u$ is harmonic on $X\setminus \ov A$.
We recall that, for general balayage spaces, this does not imply
that $\hat u$ is harmonic on~$X\setminus \ov A$, not even if $A$ is compact  (see \cite[V.9.1]{BH}). 

Let $x\in X\setminus \ov A$ and let $U$ be a
  relatively compact open neighborhood of $x$ such that $\ov U\cap \ov
  A=\emptyset$. We intend to show that
\begin{equation}\label{hau}
                 \int \hat u\,d\vx^\uc=\hat u(x). 
\end{equation} 
To that end we fix  a~relatively compact  open neighborhood $V$ of $0$ such that 
$\ov U+V$ does not intersect $\ov A$.
Moreover, let $\lambda$ be a~measure on $X$ such that $G\lambda$ is a~continuous real potential
which is strict. By  \cite[VI.8.2 and VI.5.15]{BH}, $\lambda$
charges every Borel measurable non-empty finely open set, but does not
charge semipolar sets. We define $\nu:=1_V\lambda$ and
\[
    \mu:=\vx^\uc\ast \nu,
\]
that is, for every  Borel measurable set $B$ in $X$,
\[
 \mu(B)=\int \vx^\uc(B-y)\,d\nu(y)=\int \nu(B-y)\,d\vx^\uc(y).
\]
In particular, $\mu$ does not charge semipolar sets.

By translation invariance, the functions $y\mapsto u(y+z)$, $z\in
X$, are harmonic on~$X\setminus \ov{(A+z)}$, and hence 
\[ %\begin{equation}\label{har}  
\int u\,d\mu=\int \int   u(y+z) \, d\vx^\uc(y) \,d\nu(z) 
                    =\int u(x+z)\,d\nu(z).
\] %\end{equation} 
The  set $\{\hat u<u\}$ is semipolar, by \cite[VI.5.11]{BH},
and  hence the integrals do not change, if $u$ is replaced by $\hat u$,
that is,
\[
    \int \int   \hat u(y+z) \, d\vx^\uc(y) \,d\nu(z) =\int \hat
    u\,d\mu=\int \hat u(x+z)\,d\nu(z).
\]
Since $\hat u\in\W$, we know that $\int \hat u(y+z)\,d\vx^\uc(y)\le \hat u(x+z)$,
for every $z\in X$. Therefore
\begin{equation}\label{ae}
u_x(z):=\int \hat u(y+z)\,d\vx^\uc(y)= \hat u(x+z)
\end{equation}
for $\nu$-almost every $z\in X$, where also $u_x\in \W$.
Since $\nu$ charges every non-empty finely open set in $V$,
we conclude that (\ref{ae}) holds for every $z\in V$.
In particular, (\ref{hau})~holds.    %$\int \hat u(y)\,d\vx^\uc(y)=\hat u(x)$.
\end{proof} 

\begin{corollary}\label{unique}
For every relatively compact set $A$ in $X$, there exists a~unique measure $\mu$
on $\ov A$ such that  $\hat R_1^A=G\mu$.
\end{corollary}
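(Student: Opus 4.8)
The plan is to reduce the statement to three facts: first that $\hat R_1^A$ is a real potential, then that it admits a representation $\hat R_1^A=G\mu$ via Assumption A(ii), and finally that the support characterization recorded in the remark following Assumption A pins down both the location and the uniqueness of $\mu$. Thus existence and uniqueness will follow once I show (a) $\hat R_1^A$ is a potential and (b) it is harmonic off $\ov A$.

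For (a) I would use that $\ov A$ is compact. As in the proof of Proposition \ref{har-com}, there is a strictly positive continuous real potential $q=G\lambda$. Since $q$ is continuous and strictly positive, $q\ge c$ on the compact set $\ov A$ for some $c>0$, so $c\inv q\in\W$ satisfies $c\inv q\ge 1$ on $A$; by the very definition of the reduced function this gives $R_1^A\le c\inv q$, hence $\hat R_1^A\le c\inv q$. Because any element of $\W$ dominated by a potential is itself a potential (greatest harmonic minorants are nonnegative and monotone, see \cite{BH}), it follows that $\hat R_1^A$ is a real potential, and Assumption A(ii) then produces a measure $\mu$ on $X$ with $\hat R_1^A=G\mu$.

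For (b) I would apply Proposition \ref{har-com} with $v=w=1$ (here $1\in\W\cap\C(X)$), which shows that $\hat R_1^A=G\mu$ is harmonic on $X\setminus\ov A$. By the remark following Assumption A, the complement of $\supp\mu$ is the \emph{largest} open set on which $G\mu$ is harmonic, so $X\setminus\ov A\subseteq X\setminus\supp\mu$, that is, $\supp\mu\subseteq\ov A$; hence $\mu$ is carried by $\ov A$. Uniqueness is then immediate from the same remark: the measure representing a given potential is uniquely determined on all of $X$, and a fortiori among measures supported in $\ov A$.

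The main obstacle I anticipate is step (a): showing that $\hat R_1^A$ is genuinely a potential rather than merely a member of $\W$. This is precisely where the relative compactness of $A$ and the transience encoded in condition (C)—through the availability of a strictly positive continuous potential $q$ to dominate $\hat R_1^A$—are indispensable; without such a global potential the reduced function need not be a potential and the representation could break down. By contrast, the localization of $\supp\mu$ in $\ov A$ and the uniqueness are routine once the representation $\hat R_1^A=G\mu$ and the harmonicity of $G\mu$ off $\ov A$ are established.
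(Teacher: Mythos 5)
Your proof is correct and is precisely the argument the paper intends: Corollary \ref{unique} is stated there without proof, as an immediate consequence of Proposition \ref{har-com}, Assumption A(ii), and the remark following Assumption A (uniqueness of the representing measure and the characterization of its support via the largest open set of harmonicity). Your step (a) — showing $\hat R_1^A$ is a real potential by dominating it with $c\inv G\lambda$, using $G>0$, continuity of $G\lambda$, and compactness of $\ov A$ — correctly fills in the one detail the paper leaves implicit.
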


\begin{corollary}\label{H} 
Every semipolar set in $X$ is polar.
\end{corollary}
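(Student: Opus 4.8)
The plan is to read off Corollary~\ref{H} from the two structural facts already at our disposal: the decomposition of potentials into continuous ones (Proposition~\ref{axiom-D}) and the existence of the capacitary measure (Corollary~\ref{unique}). Since a countable union of polar sets is polar (see \cite{BH}) and, by definition, a semipolar set is a countable union of totally thin sets, and since $X$ is covered by countably many relatively compact open sets $W_k$ (with $T\cap W_k$ again totally thin and relatively compact, total thinness being hereditary), it suffices to prove that every relatively compact totally thin Borel set $T$ is polar, i.e.\ that $\hat R_1^T=0$; here we may assume $1\in\W$, arguing as in the second remark following~(C).

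The engine of the argument is the following fact, which I would isolate first: \emph{the measure representing any real potential does not charge semipolar sets.} Indeed, if $p=G\mu$ is a real potential, Proposition~\ref{axiom-D} lets me write $p=\sum_n p_n$ with each $p_n=G\mu_n$ a continuous real potential. A measure whose potential is a continuous real potential charges no semipolar set --- this is exactly the property of the measure $\lambda$ exploited in the proof of Proposition~\ref{har-com} (see \cite[VI.5.15]{BH}) --- so no $\mu_n$ charges a semipolar set. By uniqueness of the representing measure (the remark following Assumption~A, cf.\ \cite{HN-representation}), $\mu=\sum_n\mu_n$, whence $\mu$ charges no semipolar set either. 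This is the step where the local triangle property is indispensable: it enters through Proposition~\ref{evans-vasilesco} and, via Proposition~\ref{axiom-D}, through the continuous decomposition of potentials, which is precisely what breaks down for processes outside the present setting.

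To conclude, let $T$ be relatively compact, totally thin and Borel, and let $\mu$ be the measure on $\ov T$ with $\hat R_1^T=G\mu$ furnished by Corollary~\ref{unique}. Because $T$ is totally thin, there is $v\in\W$ with $\hat R_v^T<v$; hence $T$ has no regular points, its base is empty, and its fine closure is $T$ itself (a point at which $T$ were non-thin would satisfy $\hat R_v^T=v$ there). Since reduction is idempotent, $R_{G\mu}^T=R_{\hat R_1^T}^T=\hat R_1^T=G\mu$, and the converse to the domination principle (a standard fact of the balayage-space calculus, cf.\ \cite{BH,H-course}) then forces $\mu$ to be carried by the fine closure of $T$, that is, by $T$. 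As $T$ is semipolar, the engine fact of the previous paragraph gives $\mu(T)=0$, so $\mu=0$ and $\hat R_1^T=0$; thus $T$ is polar, and Corollary~\ref{H} follows.

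The only genuinely delicate point is the engine fact that representing measures ignore semipolar sets; everything else --- the localization to relatively compact sets, the heredity of total thinness, and the identification of the carrier of the capacitary measure with the finely closed, base-free set $T$ --- is routine balayage-space bookkeeping. I expect the main obstacle to be bookkeeping of a different kind: making sure that the carrier of $\mu$ is pinned down in the \emph{fine} (not merely topological) closure of $T$, since Proposition~\ref{har-com} only delivers harmonicity off the topological closure $\ov T$, and a totally thin $T$ may well be topologically dense. It is exactly here that combining emptiness of the base with the fact that $\mu$ does not charge semipolar sets is what closes the gap.
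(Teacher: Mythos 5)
Your ``engine fact'' is correct and its derivation is sound: by Proposition~\ref{axiom-D} a real potential $G\mu$ splits as $\sum_n G\mu_n$ with each $G\mu_n$ a continuous real potential (indeed $\mu_n=1_{K_n}\mu$ in that proof), and the representing measure of a continuous real potential charges no semipolar set (\cite[VI.5.15]{BH}, exactly the property of $\lambda$ used in the proof of Proposition~\ref{har-com}); hence $\mu$ charges no semipolar set. The reduction to relatively compact totally thin Borel sets is acceptable bookkeeping. The genuine gap is the step you yourself flag as delicate and then dispose of by citation: the ``converse to the domination principle'', i.e.\ that $R_{G\mu}^T=G\mu$ forces $\mu$ to be carried by the \emph{fine} closure of $T$. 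This is not a standard fact available in \cite{BH} or \cite{H-course}: \cite{BH} treats balayage spaces with no Green kernel at all, so no statement about representing measures can be found there, and Corollary~\ref{domination} of the paper is only the direct implication. Classical proofs of such converse statements (say, for Riesz kernels) pass through the dual identity $\hat R_{G(\cdot,y)}^T(x)=\hat R_{G(\cdot,x)}^T(y)$, i.e.\ through symmetry of $G$, which is not assumed here; Assumption~B only yields the local quasi-symmetry $G(y,x)\le C\,G(x,y)$. Worse, what you need is a priori of the same depth as hypothesis (H) itself: from $R_{G\mu}^T=G\mu$ one can extract (Fubini, fine continuity, and the measure $\lambda$) that $\hat R_{G(\cdot,y)}^T=G(\cdot,y)$ for $\mu$-almost every $y$, but passing from this to ``$y$ lies in the fine closure of $T$'' is precisely the assertion that capacitary mass cannot sit on the thin part of $T$ --- which fails in general balayage spaces and is exactly the kind of phenomenon (H) is meant to rule out. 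As written, the proof is therefore circular at its crucial point.

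The gap is self-inflicted and easily repaired: reduce to \emph{compact} totally thin sets rather than relatively compact Borel ones. This costs nothing beyond bookkeeping you already invoke: replace a totally thin set by a Borel totally thin hull using \cite[VI.2.2]{BH}, and use capacitability, $\hat R_1^A=\sup\{\hat R_1^K\colon K\subset A\text{ compact}\}$, to reduce polarity to compact subsets, which are again semipolar. For compact $T$ one has $\ov T=T$, so Corollary~\ref{unique} already places $\mu$ on the semipolar set $T$; your engine fact gives $\mu(T)=0$, hence $\mu=0$ and $\hat R_1^T=G\mu=0$. No fine closure, and no converse domination principle, is needed. This repaired argument is in substance the paper's own proof: there one takes a compact nonpolar $K$, notes that $\hat R_1^K\ne 0$ is harmonic on $K^c$ (Proposition~\ref{har-com}), uses Proposition~\ref{axiom-D} to extract a continuous potential $p\ne 0$ with $\hat R_1^K-p\in\W$ (so that $p$, being a specific minorant of a function harmonic on $K^c$, is itself harmonic on $K^c$), and concludes from \cite[VI.5.15]{BH} that $K$ is not semipolar --- the same mechanism as your engine fact, deployed where the carrier question is trivial.
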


\begin{proof} 
Let $K$ be a~compact, nonpolar set in $X$. Then $\hat R_1^K\ne 0$ and, by Proposition~\ref{har-com},  
$\hat R_1^K$ is harmonic on $K^c$. Hence, by Proposition ~\ref{axiom-D}, there exists a~continuous potential 
$p\ne 0$  on $X$ such that $\hat R_1^K-p\in \W$. 
Then both $p$ and $\hat R_1^K-p$ are  harmonic on $K^c$.  By \cite[VI.5.15]{BH}, this implies that $K$ is not semipolar.
\end{proof} 

% By \cite[Section 6]{HN-unavoidable}, we have the following consequence for L\'evy processes with 
% with isotropic unimodular Green function.

% \begin{corollary}\label{iso} Let $\mathbbm P=(P_t)_{t>0}$ be a right continuous sub-Markov semigroup on $\reald$, $d\ge 1$,
% such that its potential kernel $V_0:=\int_0^\infty P_t\,dt$ is given by $V_0f=G_0\ast f$, $f\in \B^+(\reald)$, where
% $G_0=g(|\cdot|)$  and $g\colon [0,\infty)\to (0,\infty]$ is decreasing with $0<g<\infty$ on $(0,\infty)$, 
% $\lim_{r\to 0} g(r)=g(0)=\infty$, $\lim_{r\to\infty} g(r)=0$, $\int_0^1 g(r) r^{d-1}\,dr<\infty$, and $g(r)\le C g(2r)$
% for some constants  $C,r_0\in (0,\infty)$ and all $0<r<r_0$.

% The $(\reald, \es)$ is a balayage space, where every semipolar set is polar.
% \end{corollary} 

\bibliographystyle{plain} %  {alpha}
%\bibliography{lit_bank.bib}
\def\cprime{$'$} \def\cprime{$'$}
%\end{document}

{\small \noindent 
Wolfhard Hansen,
Fakult\"at f\"ur Mathematik,
Universit\"at Bielefeld,
33501 Bielefeld, Germany, e-mail:
 hansen$@$math.uni-bielefeld.de}\\
{\small \noindent Ivan Netuka,
Charles University,
Faculty of Mathematics and Physics,
Mathematical Institute,
 Sokolovsk\'a 83,
 186 75 Praha 8, Czech Republic, email:
netuka@karlin.mff.cuni.cz}

\end{document}